\documentclass[12pt,twoside]{amsart} \usepackage{amssymb,color,tikz}
\usetikzlibrary{matrix} \nonstopmode \textwidth=16.00cm

\usepackage{xcolor}

\usepackage{amsthm, amsfonts, amssymb, amsmath, latexsym, enumerate,array,epsfig}
\usepackage{amscd}
\usepackage[all]{xy}
\usepackage{graphicx,color}
\usepackage{pgf,tikz}
\usetikzlibrary{arrows}
\graphicspath{{.}}
   \makeatletter
   \let\accent@spacefactor\relax
   \makeatother
   \usepackage[english]{babel}
\usepackage{amsfonts}

\textheight=24.00cm \topmargin=-1.00cm \oddsidemargin=0.25cm
\evensidemargin=0.25cm \headheight=0.3cm
\headsep=0.5cm %schiebt Blatt nach oben
\numberwithin{equation}{section} \hyphenation{semi-stable}

\newcommand{\NN}{\mathbb{N}}
\newcommand{\kk}{{\bf k}}
% ------------------------------------------------
%
% Symbols in "mathbb"
%
% ------------------------------------------------

\newcommand {\PP}{\mathbb{P}}

\newcommand{\cT}{\mathcal{T}}
\newcommand{\cA}{\mathcal{A}}

\newcommand{\cP}{\mathcal{P}}

\newcommand{\cO}{\mathcal{O}}

\def\cOP2{{\mathcal O}_{{\mathbb P}^2}}

% ------------------------------------------------
%
% Special Names
%
% ------------------------------------------------
 \DeclareMathOperator{\Proj}{Proj}
 \def\cocoa{{\hbox{\rm C\kern-.13em
      o\kern-.07em C\kern-.13em o\kern-.15em A}}}
% ------------------------------------------------ Theoremlike
% environments ------------------------------------------------
\newtheorem{theorem}{Theorem}[section]
\newtheorem{lemma}[theorem]{Lemma}
\newtheorem{proposition}[theorem]{Proposition}

 \theoremstyle{definition}
\newtheorem{definition}[theorem]{Definition} \theoremstyle{remark}
\newtheorem{remark}[theorem]{Remark}
\newtheorem{example}[theorem]{Example}

% ------------------------------------------------
%
% Other special commands
%
% ------------------------------------------------
\definecolor{MyDarkGreen}{cmyk}{0.7,0,1,0}

% ------------------------------------------------
\begin{document}

\title[Saito's theorem revisited and pencils of hypersurfaces]
{Saito's theorem revisited and application   to free pencils of hypersurfaces}

\author[R. Di Gennaro]{Roberta Di Gennaro}
\address{Di Gennaro: Dipartimento di Matematica e Applicazioni “Renato Caccioppoli” \\
 Universit\`a degli Studi di Napoli Federico II\\
  80126 Napoli, Italy}
\email{digennar@unina.it}

\author[R.M. Mir\'o-Roig]{Rosa Maria Mir\'o-Roig}
\thanks{Mir\'o-Roig was partially supported by the grant PID2020-113674GB-I00}
\address{Mir\'o-Roig: Department de Mathem\`atiques i Inform\`atica\\
  Universitat de Barcelona\\
 08007 Barcelona, Spain}
\email{miro@ub.edu, ORCID 0000-0003-1375-6547}

\subjclass[2000]{Primary 14C21, 14J70; Secondary  14H20 14J60}

 \keywords{free divisor, hypersurface pencil, eigenschemes.}

\begin{abstract}  A hypersurface $X\subset \PP^n$  is said to be free if its associated sheaf $T_X$  of vector fields tangent to $X$  is a free ${\mathcal O}_{\PP^n}$-module. So far few examples of free hypersurfaces are known. In this short note, we reinterpret Saito's criterion of freeness in terms of multiple eigenschemes (ME) and as application we construct huge families of new examples of  free reduced hypersurfaces in $\PP^n$. All of them are union of hypersurfaces in a suitable pencil.
\end{abstract}

\maketitle

\section{Introduction} 
In this paper, we investigate the freeness of reduced hypersufaces in $\PP^n$ with $n\geq 3$. The study of freeness of divisors is a classical topic introduced by Saito for reduced divisors \cite{S} and studied by Terao for hyperplane arrangements \cite{T}. 
A reduced hypersurface $S=V(f)$ in $\PP^n$ is said to be free if its
module of logarithmic derivations is free, i.e., the direct sum of line bundles. This notion turns out to be very interesting, as
free hypersurfaces tend to be quite special and often exhibit unique properties.
%The notion of free divisor (and, in particular, of free hypersurface) was introduced by Saito for reduced divisors \cite{S} and studied by Terao for hyperplane arrangements \cite{T}
Actually, few examples of free hypersurfaces are known. In the plane there is a quite extended literature on free curves, but for $n>2$ there are few examples. We recall here \cite{BC} (where binomial free divisors are characterized and some other examples are given); \cite{DS15} (where isolated cases and countable families of free (rational) surfaces in $\mathbb P^3$ and some examples in $\mathbb P^n, n \geq 3$ related to the discriminant of binary forms are given) and \cite{DS17} (where some examples of free plane arrangements and surfaces are given). 

 Our idea comes from \cite{DIMSV} where the authors use eigenschemes and pencils of curves, combined with an interpretation
of Saito’s criterion. Here we extend this approach to $\mathbb P^n, n\geq 3$ by defining Multiple Eigenschemes of closed subschemes and relating them to freeness of hypersurfaces. Finally, we give some applications.

\vskip 2mm
\noindent {\bf Acknowledgment.} The authors would like to thank J. Vall\` es for useful discussions on this subject. Indeed, our results are inspired in \cite{V} and \cite{DIMSV}.

\section{Preliminaries}

This section contains the basic definitions and results on jacobian ideals associated to reduced singular hypersurfaces in $\PP^n$ as well as on the module of logarithmic derivations. Moreover, we recall the notion of eigenscheme and we introduce ME-schemes (Multiple Eigenschemes) as a natural generalization of eigenpoints. This section lays the groundwork for the results in the later sections.

\vskip 2mm
\noindent {\bf Notation.}  From now on, we fix the polynomial ring $R=\kk$$[x_0,\cdots ,x_n]=\oplus_{d}R_d$ over an algebraically closed field $\kk $ of characteristic zero and we denote by $S: \ f = 0$ a reduced hypersurface of degree $d$ in the
$n$-dimensional projective space $\PP^n=\Proj(R)$ defined by an homogeneous polynomial $f\in R_d$. As usual, we denote by $\partial _{x_i}$, $i=0,\cdots ,n$ the partial derivatives with respect to $x_i$. Let $Der(R)=\{ a_0\partial _{x_0} +a_1 \partial _{x_1}+ \cdots +a_n\partial _{x_n} \mid a_i\in R \}$ be the free $R$-module of $\kk $-derivations of $R$.

\vskip 2mm

\subsection{The module of logarithmic derivations} As usual, for any reduced hypersurface $S : \ f = 0$ defined by a homogeneous polynomial $f$ of degree $d$, we define the {\em module of tangent derivations} (also called {\em the  module of logarithmic derivations}) $Der(f)$
as the graded $R$-module
 $$Der(f) =\{\delta \in Der(R) \mid \delta  (f)\in \langle f\rangle \}.$$
 The so-called Euler derivation $\delta _E=x_0\partial _{x_0}+\cdots +x_n\partial _{x_n}$ belongs to $Der(f)$ and we have the factorization
 $$Der(f)=R\delta _E \oplus Der_0(f)
 $$
 where $Der_0(f)=\{ \delta \in Der(f)\mid \delta(f)=0 \}$. Let $\nabla _f=(\partial _{x_0}f,\partial _{x_1}f,\cdots ,\partial _{x_n}f)$ be the vector of partial derivatives and $J_f$ the Jacobian ideal (generated by these partial derivatives); i.e. $J_f=\langle \partial _{x_0}f,\partial _{x_1}f,\cdots ,\partial _{x_n}f \rangle \subset R$.
 \\ Then $Der_0(f)$ is isomorphic to the kernel of the Jacobian map
 $$\nabla _f:R^{n+1}\longrightarrow R(d-1)$$
 i.e. $Der_0(f)$ is identified with the $R$-module of all Jacobian relations for $f$
, i.e.,
$$
Der_0(f)\cong Syz(J_f):=\left\{(a_0,a_1,\cdots ,a_n)\in R ^{n+1}\mid a_0\frac{\partial f}{\partial x_0} + \cdots +a_n\frac{\partial f}{\partial x_n} = 0 \right\}.
$$
We will denote by $Syz(J_f)_t$ the homogeneous part of degree $t$ of the graded $R$-module $Syz(J_f)$; $Syz(J_f)_t$ is a $\kk$-vector space of finite dimension.
The minimal degree of a Jacobian syzygy for $f$ is the integer $mdr(f)$ defined to be the smallest integer $r$ such that there
is a nontrivial relation
$  a_0\frac{\partial f}{\partial x_0} + a_1\frac{\partial f}{\partial x_1} + \cdots +a_n\frac{\partial f}{\partial x_n} = 0 $
among the partial derivatives  $\frac{\partial f}{\partial x_i}$, $i=0,\cdots n$,   of $f$ with coefficients $a_i \in R_r$. Therefore, we have:
$$mdr(f)=min\{ n\in \NN \mid Syz(J_f)_n\ne 0\}.
$$
It is well known that  $mdr(f) = 0$ (i.e. the $n+1$ partial  derivatives
$\frac{\partial f}{\partial x_i}$, $i=0,\cdots n$ are linearly dependent) if and only if  $S$ is
a union of hyperplanes passing through one point $p\in \PP^n$.

\begin{definition}  Let $S: \ f = 0$ be a reduced hypersurface of degree $d$ in $\PP^n$. We say that $S$ is {\em free} if the graded $R$-module $Der_0(f)$ of all Jacobian relations for $f$ is a free $R$-module, i.e., $Der_0(f)=R(-d_1)\oplus R(-d_2)\oplus\cdots \oplus  R(-d_n)$ with $d_1+d_2+\cdots +d_n=d-1$. In this case $S$ is said to be free with exponents $(d_1,d_2,\cdots ,d_n)$.
\end{definition}

Actually, few examples of free hypersurfaces are known. Below we give some of them
\begin{example}
\par\vspace{2mm}
\begin{enumerate}[(1)]
\item  The surface $S:\  f=x^6z+y^7+x^5yt+x^4y^3=0$ is free with exponents $(1,2,3)$ \cite{DS15}. Indeed, the Jacobian ideal $J_f=\langle 6x^5z+5x^4yt+4x^3y^3, 7y^6+x^5t+3x^4y^2,x^6,x^5y \rangle $ of $f$ has a resolution of the following type:
$$
0 \longrightarrow R(-3)\oplus R(-2)\oplus R(-1) \longrightarrow R^4\longrightarrow J_f(6)\longrightarrow 0.
$$
\vskip 2mm

\item For any $d\ge 10$, the surface $S:\  f=x^{d-1}z+y^d+x^{d-2}yt+x^{d-5}y^5=0$ is free with exponents $(1,4,d-6)$ \cite{DS15}. Indeed, the Jacobian ideal $J_f$ of $f$ has a resolution of the following type:
$$
0 \longrightarrow R(-d+6)\oplus R(-4)\oplus R(-1) \longrightarrow R^4\longrightarrow J_f(d-1)\longrightarrow 0.
$$
For $d=9$, the surface $S:\  f=x^{d-1}z+y^d+x^{d-2}yt+x^{d-5}y^5=0$ is not  free. Indeed, its Jacobian ideal $J_f$ has a minimal free $R$-resolution of the following type:
$$
0 \longrightarrow R(-5) \longrightarrow  R(-4)^3\oplus R(-1) \longrightarrow R^4\longrightarrow J_f(8)\longrightarrow 0.
$$

\vskip 2mm
\item The surface $S:\  f=y^2z^2-4xz^3-4y^3t+18xyzt-27x^2t^2=0$ is free with exponents $(1,1,1)$ \cite{DS15}. Indeed, the Jacobian ideal $J_f$ of $f$ has a resolution of the following type:
$$
0 \longrightarrow  R(-1)^3 \longrightarrow R^4\longrightarrow J_f(3)\longrightarrow 0.
$$

\item Starting from the triangle $T:xyz=0$, the curve $C:f=xyzc_1c_2=0$ with $c_1,c_2$ two general conics through the vertices of $T$ is free of exponent (2,4) (see also \cite[Example 2.1]{ST}).
\begin{center}
  \vspace{.5cm}  
\begin{tikzpicture}
\draw [domain=-0.5:3] plot(\x,0);
\draw [domain=-0.2:1.1] plot(\x,{(2.3*\x)});
\draw [domain=0.4:2.8] plot(\x,{(-1.5*\x+3.6)});
\draw[rotate=0] (1.2, 1) ellipse (2.3cm and 1.2cm);
\draw[rotate=55] (1.2, 0) ellipse (1.2cm and 2cm);
%\draw (-0.2,-0.2) -- (1,1.7);
%\draw (-0.4,0)--(3,0);
%\draw (2.8,-0.2) --(0.6,1.7);
\end{tikzpicture}
\vspace{.5cm}
\end{center}

Indeed, its Jacobian ideal $J_f$ has a resolution of the following type:
$$
0  \longrightarrow  R(-4)\oplus R(-2) \longrightarrow R^3\longrightarrow J_f(6)\longrightarrow 0.
$$
Actually, in \cite[Proposition 4.1]{DIMSV} it is  proved that by adding conics $c_i=a_ic_1+b_ic_2$ in the pencil of $c_1,c_2$, the freeness is preserved (with exponent $(2,2k)$ if we add $k-2$ conics, but by adding $c_3,\ldots,c_{k}$ general conic through the vertices of $T$ ($k\geq 5)$, we loose the freeness. Indeed in the pencil case with $f=xyzc_1c_2\Pi_{i=1,2,3}(a_ic_1+b_ic_2)$ the Jacobian ideal $J_f$ has a minimal free $R$-resolution of the following type:
$$
0  \longrightarrow  R(-10)\oplus R(-2) \longrightarrow R^3\longrightarrow J_f(12)\longrightarrow 0.
$$
in the net case $f=xyzc_1c_2c_3c_4c_5$  the Jacobian ideal $J_f$ has a minimal free $R$-resolution of the following type:
$$
0  \longrightarrow R^2(-8) \longrightarrow  R^4(-6) \longrightarrow R^3\longrightarrow J_f(12)\longrightarrow 0.
$$

\item The following example shows how it works the classical and useful tool \lq\lq addition-deletion\rq\rq \cite{T}. \\ The plane arrangement $xyzt\prod_{i=1}^s(a_ix+b_iy)\prod _{j=1}^r(c_jz+d_jt)$ is free with exponents $(1,s+1,r+1)$. 
\\ In fact starting from the arrangement $xyzt$ that is free with exponents $(1,1,1)$, if we restrict to a plane $ax+by$ we get a triangle of lines that is free with exponent $(1,1)$, so $xyzt(ax+by)$ is free with exponents $(1,1,2)$. Inductively, adding planes of the same shape $a_ix+b_iy$, the restricted line arrangement is always the triangle and so after $s$ steps $\cA_s=xyzt\Pi_{i=1}^s (a_ix+b_iy)$ is free with exponents $(1,1, 1+s)$. Now we have to add planes $c_iz+d_it$ to $\cA_s$. The restriction of $\cA_s$ to $c_iz+d_it$ is the line arrangement consisting of the line $z=0=t$ and $s+2$ lines through a point  meeting the first line in $s+2$ distinct points. This line arrangement is free with exponents $(1,s+1)$. So the union of $\cA_s$ with the plane is free $(1,s+1,2)$ and inductively $xyzt\prod_{i=1}^s(a_ix+b_iy)\prod _{j=1}^r(c_jz+d_jt)$   is free with exponents $(1,s+1,r+1)$. 

\begin{center}
\begin{figure}
\begin{tikzpicture}[y ={(1 cm ,0 cm )} , x ={(-0.5 cm ,-0.5 cm )} ,z ={(0 cm ,1 cm )}] % sistema di riferimento tikz 3d
 \coordinate (O) at (0,0,0);
 \draw [-latex ] 
 (O) -- (3.5,0,0) node [ left ] {$ x $}
 ;
\draw 
[-latex ] 
(O) -- (0,3.5,0) node [ right ] {$ y $}
;
  \draw [-latex ] (O) -- (0,0,3.5) node [ above ] {$ z $};
\draw (O) -- (0,3,0) -- (3,3,0)-- (3,0,0) -- cycle;
\draw (O) -- (0,0,3) -- (3,0,3)-- (3,0,0) -- cycle;
\draw (O) -- (0,3,0) -- (0,3,3)-- (0,0,3) -- cycle;
\draw (O) -- (3,2.7,0) -- (3,2.7,3)-- (0,0,3) -- cycle;
\draw  (3,0,0) -- (0,0,3)-- (0,3,0) -- cycle;
\draw (1.5,1.4,0) -- (0,0,3) -- cycle;
\filldraw[fill=gray,line width=2pt] (0,0,0) -- (0,0,3) -- (1.5,1.4,0) -- cycle;
 \end{tikzpicture}  
 \\
 \caption{The restricted line arrangement in the plane $x-y=0$}
 \end{figure}
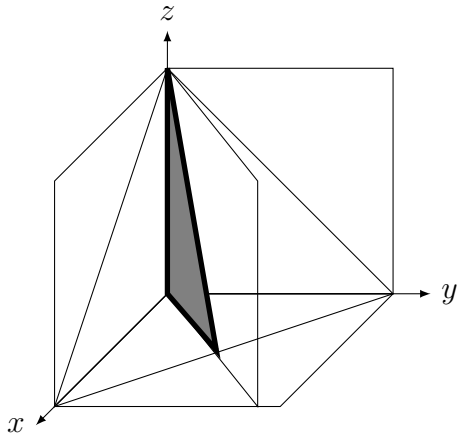
\end{center}

\end{enumerate}\end{example}
\begin{remark}Other known examples are pencils of curves in the plane, whose study is related to the knowledge of a \lq\lq canonical\rq\rq\ derivation introduced by Vall\` es in \cite{V}.
In order to generalize it in higher dimension, we consider a family generated by $n$ hypersurfaces $f_1,\ldots, f_n$ in $\mathbb P^n$ and the derivation defined as 
$$
\delta_{f_1,\ldots,f_n}=\left\vert \begin{array}{cccc}  
\partial_{x_0} & \partial_{x_1} & \cdots & \partial_{x_n} \\
\partial_{x_0} f_1 & \partial_{x_1}f_1 & \cdots & \partial_{x_n}f_1\\
\cdots &\cdots & &\\
\partial_{x_0} f_n & \partial_{x_1}f_n & \cdots & \partial_{x_n}f_n
\end{array} \right\vert.
$$
Of course $\delta_{f_1,\ldots,f_n}(f_i)=0$ for $i=1,\ldots,n$ so $\delta_{f_1,\ldots,f_n}\in Der_0(f)$ for each $f\in\langle f_1,\ldots,f_n\rangle$. 
\\ It is obvious that $\delta_{f_1,\ldots,f_n}=0$ if $\nabla_{f_i}$ are linearly dependent. For example, in $\mathbb P^3$, $\delta_{x,y,x+y}=0$, but the coordinate planes $x,y,z$ give the derivation $\delta_{x,y,z}=\partial_t\neq 0$. So, it is natural to look for geometric conditions in order to assure that $\delta_{f_1,\ldots,f_n}\neq 0$.
We note that the dimension of the base locus of the family defined by $f,g,h$ is not a strong condition in this sense. For example, letting $f=x^2-yz,g=x^2-yt,h=x^2-zt$, we get $\delta_{f,g,h}=yzt\partial_x+xy(-y+z+t)\partial_y+xz(y-z+t)\partial_z+xt(y+z-t)\partial_t$ the base locus is a zero dimensional scheme of degree $8$; instead letting $f=x^2-yz,g=y^2-xz,h=z^2-xy$, we get $\delta_{f,g,h}=(x^3+y^3+z^3-3xyz)\partial_t$ and the base locus is a cubic curve.  
\end{remark}

\vskip 2mm
It is a longstanding problem in singularities to determine whether a hypersurface is free. 
A hypersurface  $S: \ f=0$ in $\PP^n$ is free if
 the Jacobian ideal $J_f$ of $f$ is saturated and defines an arithmetically Cohen-Macaulay subscheme of codimension two.  Cohen-Macaulay ideals of codimension 2 are completely described by the so-called 
Hilbert-Burch theorem \cite{e}: if $I = \langle g_1,\ldots,
g_m\rangle \subset R$ is a Cohen-Macaulay ideal of codimension two, then $I$ is defined
by the maximal minors of the $(m+1)\times m$ matrix of the first
syzygies of the ideal $I$. As Saito's criterion says (see also Section 3) combining this with Euler's formula for a homogeneous
polynomial we obtain that a free hypersurface $S: \ f=0$ in $\PP^n$ has a very
constrained structure: $f = \det(M)$ for a $(n+1) \times (n+1)$ matrix $M$, with one row consisting of the $n+1$ variables, and the
remaining $n$ rows the minimal first syzygies on $J_f$.

%%%%%%%%%%%%%%%%%%%%%%%%%%%%%%%%%%%%%%%%%%%%%%%%%%%%%%%%%%%%%%%%%%%%

\subsection{Eigenschemes and ME-schemes in $\PP^n$}

There are different notions of eigenvectors and eigenvalues for tensors as introduced independently in \cite{L} and \cite{Q}. Here we focus our attention on the algebraic-geometric point of view and we also introduce the notion of multiple eigenscheme (ME-scheme, for short) as a natural generalization of the concept of eigenscheme. To this end, we choose a basis for $\kk$$^{n+1}$ and we identify a partial symmetric tensor $T$  with a $(n+1)$-uple of homogeneous polynomials of degree $d-1$. We describe the eigenpoint of a tensor $T$ (respectively, the ME-scheme of $r$ tensors $T_1, \cdots ,T_r$) algebraically by the vanishing of the minors of a homogeneous matrix $M_T$ (respectively, $M_{T_1,\cdots ,T_r}$). More precisely, we have:

\begin{definition}  \label{def: eigenscheme}
Let $T_i=(g_0^{i},g_1^{i}, \cdots ,g_n^{i})\in (Sym^{d_i-1}\kk$$^{n+1})^{\oplus (n+1)}$, $i=1,\cdots ,r$, be $r$ partially symmetric tensors. The {\em eigenscheme} of $T_i$ is the closed subscheme $E(T_i)\subset \PP^{n+1}$ defined by the $2\times 2$ minors of the homogeneous matrix
\begin{equation}\label{def_matrix_2}
M_{T_i}=\begin{pmatrix} x_0 & x_1 & \cdots   & x_n \\
g_0^{i} & g_1^{i}  & \cdots  & g_n^{i}
\end{pmatrix}, \ i=1,\cdots r.
\end{equation}

The {\em Multiple Eigenscheme} (ME-scheme for short)  of $T_1, \cdots , T_r$ is the closed subscheme $E(T_1,\cdots ,T_r)\subset \PP^{n+1}$ defined by the $(r+1)\times (r+1)$ minors of the homogeneous matrix
\begin{equation}\label{def_matrix}
M_{T_1,\cdots, T_r}=\begin{pmatrix} x_0 & x_1  & \cdots  & x_n \\
g_0^{1} & g_1^{1}  & \cdots  & g_n^{1}
\\
\vdots & \vdots & \cdots & \vdots \\
g_0^{r} & g_1^{r}  & \cdots  & g_n^{r}
\end{pmatrix}.
\end{equation}

 If $T_1, \cdots T_r$ are general, then $E(T_i)$ is a $0$-dimensional scheme (see, for instance, \cite{Abo}) and $E(T_1,\cdots, T_r)$ is a $(r-1)$-dimensional scheme.
Moreover, by the Hochster-Eagon Theorem \cite{HE}, $R/I(E(T_i))$ and $R/I(E(T_1,\cdots ,T_r))$  are Cohen-Macaulay rings and, hence, the ideals $I(E(T_i))$ and $I(E(T_1,\cdots ,T_r))$ are  saturated. Therefore, $E(T_i)$ and $E(T_1,\cdots ,T_r)$ are standard determinantal schemes.
When the tensor $T_i$ is symmetric, i.e., there is a homogeneous polynomial $f_i$ such that $g_j^{i}=\frac{\partial f_i}{\partial x_j}$ for $j=0, \cdots ,n$, we denote its eigenscheme by $E(f_i)$ and the corresponding ME-scheme by $E(f_1,\cdots ,f_r)$.
\end{definition}

Set theoretically $E(T_i)$ is the union of the base locus and the fixed points of the rational map
$$\PP^n \cdots \longrightarrow \PP^n $$  $$ \hspace{3cm} x\mapsto g^{i}(x):=(g_0^{i}(x),g_1^{i}(x),\cdots ,g_n^{i}(x)).$$
In particular, the eigenpoints $E(f_i)$ are the base locus and the fixed points of
 polar map  $$\nabla f_i=(\partial_{x_0} f_i,\partial_{x_1} f_i,\cdots , \partial _{x_n}f_i):\PP^n \cdots \longrightarrow \PP^n$$ of $f_i.$ On the other hand, $E(T_1,\cdots ,T_r)$ is the closure of the union of points $x\in\PP^n$ such that $x$ lies in the linear space determined by $g^1(x), \cdots ,g^r(x)$.

 \begin{lemma}
 Fix integers $d_1, \cdots ,d_{n-1}\ge 2$ and $n-1$
  general partially symmetric tensors   $T_i=(g_0^{i},g_1^{i},\cdots ,g_n^{i})\in (Sym^{d_i-1}\kk$$^{n+1})^{\oplus n+1}$, $i=1, \cdots , n-1$. It holds:
 \begin{itemize}
     \item[(1)] $E(T_i)$ is a reduced 0-dimensional scheme in $\PP^n$ of length   $$
     \frac{(d_i-1)^{n+1}-1}{d_i-2}.
     $$
 \item[(2)]  The homogeneous ideal $I(E(T_i))\subset R$, $1\le i \le n-1$, has a minimal free $R$-resolution
     $$ 0 \longrightarrow \oplus _{j=0}^{n-1}R(-(j+1)d_i-n+2j+1) \longrightarrow   \cdots \longrightarrow $$
     $$R(-1-d_i)^{\binom {n+1} {3}}\oplus R(1-2d_i)^{\binom {n+1}{3}}
     \longrightarrow R(-d_i)^{\binom {n+1}{2}}\longrightarrow I(E(T_i)) \longrightarrow 0.
     $$
     \item[(3)]  $E(T_1,\cdots ,T_{n-1})$ is a reduced codimension 2 subscheme in $\PP^n$ of degree
     $$\frac{1}{2}(\sum _{s=1}^{n-1}(2-d_s -\sum _{j=1}^{n-1}d_j)
     )^2)+\frac{1}{2}(\sum _{j=1}^{n-1}d_j)^2 -\frac{n+1}{2}(-1+\sum _{j=1}^{n-1}d_j)^2.
     $$
     \item[(4)] The homogeneous ideal $I(E(T_1,\cdots ,T_{n-1}))\subset R$ has a minimal free $R$-resolution of the following type:
     $$ 0 \longrightarrow
  R(-\sum _{j=1}^{n-1}d_j)  \bigoplus \oplus _{s=1}^{n-1} R(2-d_s -\sum _{j=1}^{n-1}d_j)   )  \longrightarrow
     R(1-\sum _{j=1}^{n-1}d_j)^{n+1}$$
     $$
     \longrightarrow I(E(T_1,\cdots ,T_{n-1})) \longrightarrow 0.
     $$
     \end{itemize}
 \end{lemma}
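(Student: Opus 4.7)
The plan is to derive (2) and (4) directly from the Eagon-Northcott complex applied to the homogeneous matrices $M_{T_i}$ and $M_{T_1,\ldots,T_{n-1}}$, and then extract (1) and (3) as numerical consequences.

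First I would pin down the graded presentations. Reading off bidegrees column by column forces
$$M_{T_i}:\ R(-1)^{n+1} \longrightarrow R \oplus R(d_i-2),$$
$$M_{T_1,\ldots,T_{n-1}}:\ R(-1)^{n+1} \longrightarrow R \oplus \bigoplus_{s=1}^{n-1} R(d_s-2).$$
Genericity of the tensors guarantees that the ideals of maximal minors attain their expected grades, namely $n$ for $M_{T_i}$ (so $E(T_i)$ is zero-dimensional in $\PP^n$; cf.\ \cite{Abo}) and $2$ for $M_{T_1,\ldots,T_{n-1}}$. As already recorded in Definition \ref{def: eigenscheme}, the Hochster-Eagon theorem then gives Cohen-Macaulayness of both coordinate rings, so both schemes are standard determinantal.

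Now apply the Eagon-Northcott complex. For $M_{T_i}$ (ranks $n+1 \to 2$) the term in homological degree $k$ is $\bigwedge^{k+2} F \otimes S_{k}(G^\vee) \otimes (\bigwedge^2 G)^\vee$, with $F = R(-1)^{n+1}$ and $G = R \oplus R(d_i - 2)$; decomposing $S_{k}(G^\vee) = \bigoplus_{a=0}^{k} R(-a(d_i - 2))$ and collecting twists, this simplifies to
$$\bigoplus_{a=0}^{k} R\bigl(-(a+1)d_i + 2a - k\bigr)^{\binom{n+1}{k+2}},$$
which at $k = 0, 1$ and $k = n-1$ reproduces the rightmost, middle and leftmost terms of (2). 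For $M_{T_1,\ldots,T_{n-1}}$ (ranks $n+1 \to n$) only two homological degrees are nonzero---equivalently this is Hilbert-Burch applied to an ideal of maximal minors of grade two---and the same twist computation yields (4).

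Given the resolutions, (1) and (3) become numerical corollaries. For (1) the Hilbert polynomial of $R/I(E(T_i))$ is the constant length of the scheme; reading it off the Eagon-Northcott complex and telescoping the alternating sum of binomial coefficients gives $\sum_{j=0}^{n}(d_i-1)^j = \frac{(d_i-1)^{n+1}-1}{d_i-2}$, in agreement with the classical Cartwright-Sturmfels count of eigenpoints of a generic symmetric tensor. For (3) the analogous extraction from (4), combined with the standard identity $2\deg(Z) = \sum_j b_j^2 - \sum_i a_i^2$ valid for a codimension-two ACM scheme resolved by $0 \to \bigoplus R(-b_j) \to \bigoplus R(-a_i) \to I \to 0$, rearranges into the stated formula. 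Reducedness in both cases follows from a Bertini-type argument: the locus of tensors for which the Jacobian of the defining minors fails to have maximal rank at some point of the scheme is a proper closed subset of the parameter space, so generic tensors produce a smooth---hence reduced---scheme. The main obstacle is thus bookkeeping: matching the compact Eagon-Northcott terms with the component-wise shifts listed in (2) and (4), and verifying that genericity really forces the expected grade so that the complex is acyclic.
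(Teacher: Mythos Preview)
Your proposal is correct and follows essentially the same route as the paper: both invoke the Eagon--Northcott complex for the standard determinantal schemes $E(T_i)$ and $E(T_1,\ldots,T_{n-1})$ to obtain (2) and (4), and then read off the numerical invariants (1) and (3) from the resolutions. The paper's proof is a two-line reference to \cite[Theorem A2.10]{e}; you have simply unpacked the bookkeeping (the explicit twists, the degree identity $2\deg Z=\sum b_j^2-\sum a_i^2$, and the Bertini argument for reducedness) that the paper leaves implicit.
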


 \begin{proof}
 Since $E(T_i)$, $i=1,\cdots , n-1$, and $E(T_1,\cdots ,T_{n-1})$ are standard determinantal scheme of codimension $n$ and $2$, respectively,  their minimal free resolution is given by the Eagon-Northcott complex (see \cite[Theorem A2.10]{e}) and the length of these schemes can be computed directly using the resolutions.
 \end{proof}

\begin{example}
\label{ex: Fermat}
\rm
We consider the Fermat cubic surface $f_1=x^3+y^3+z^3+t^3\in \kk$$[x,y,z,t]$ and the Clebsch cubic surface $f_2= x^2y+y^2z+z^2t+t^2x \in \kk$ $[x,y,z,t]$. The eigenscheme $E(f_1)$ is the 0-dimensional subscheme of $\PP^3$ of length 15 defined by the maximal minors of
$$
M_{f_1}=\begin{pmatrix} x & y & z & t\\
x^2 & y^2 & z^2 & t^2
\end{pmatrix}.
$$
Therefore,
$E(f_1)=\{(1,0,0,0),(0,1,0,0),(0,0,1,0),(0,0,0,1),(1,1,0,0),(1,0,1,0),(1,0,\\ 0,1),
(0,1,1,0),(0,1,0,1),(0,0,1,1),(1,1,1,0),(1,1,0,1),(1,0,1,1),(0,1,1,1),(1,1,1,1)\}.$

The eigenscheme $E(f_2)$ is the 0-dimensional subscheme of $\PP^3$ of length 15 defined by the maximal minors of
$$
M_{f_2}=\begin{pmatrix} x & y & z & t\\
2xy+t^2 & x^2+2yz &  y^2+2zt & z^2+2tx
\end{pmatrix}.
$$
Therefore, $E(f_2)=\{(1,0,0,0)\}.$

The ME-scheme $E(f_1,f_2)$ is the curve in $\PP^3$   defined by the maximal minors of
$$
M_{f_1,f_2}=\begin{pmatrix} x & y & z & t\\
x^2 & y^2& z^2& t^2 \\
2xy+t^2 & x^2+2yz &  y^2+2zt & z^2+2tx
\end{pmatrix}.
$$
Therefore, $$\begin{array}{l}I(E(f_1,f_2)=\\\langle x^2y^3-xy^4-x^4z+2xy^3z+x^3z^2-2x^2yz^2-2xy^2z^2+2xyz^3+2x^2yzt-2xy^2zt+y^2zt^2-yz^2t^2, \\ x^2yz^2-xy^2z^2-x^4t+2x^3yt-2x^2y^2t+2xy^3t-2x^2yzt+x^3t^2-2xy^2t^2+2xyzt^2+y^2t^3-yt^4,\\
x^2z^3-xz^4-x^2y^2t+2x^3zt-2x^2z^2t+2xyz^2t+xy^2t^2-2x^2zt^2-2xyzt^2+2xzt^3+z^2t^3-zt^4, \\ y^2z^3-yz^4-y^4t+2xy^2zt+x^2z^2t-2xyz^2t+2yz^3t+y^3t^2-x^2zt^2-2y^2zt^2-2yz^2t^2+2yzt^3
\rangle \end{array}$$
and  $E(f_1,f_2)$ is an arithmetically Cohen-Macaulay curve in $\PP^3$ of degree 17 and arithmetic genus 34 which contains the eigenschemes $E(f_1)$ and $E(f_2)$.
\end{example}

\section{Saito's theorem revisited}\label{Saito_sec}

Let us start this section with Saito's criterion as it was stated originally (See \cite{S}). Saito's criterion is a determinantal characterization of free divisors. Indeed, we have:

\begin{theorem} Let $S=V(f)\subset \PP^n$ be a reduced hypersurface of degree $d$. $S$ is free if and only if there exist $n$ derivations $\delta _i=g_0^{i}\partial _{x_0}+g_1^{i}\partial _{x_1}+\cdots + g_n^{i}\partial _{x_n}\in Der(f)$, $i=1,\cdots ,n$, such that
 $$ \det \begin{pmatrix} x_0 & x_1  & \cdots  & x_n \\
g_0^{1} & g_1^{1}  & \cdots  & g_n^{1}
\\
\vdots &  \vdots  & \cdots  & \vdots \\
g_0^n & g_1^n & \cdots  & g_n^n
\end{pmatrix} =cf \quad \text{ with }\quad  c\in \kk ^*
  $$
\end{theorem}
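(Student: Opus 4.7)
The plan is to prove both implications via the Hilbert-Burch strategy alluded to in the paragraph preceding the theorem, combining Cramer's rule, Euler's identity, and the structure theorem for codimension-two Cohen-Macaulay ideals.

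\emph{Forward direction.} Assume $S$ is free, so $Der_0(f)=\bigoplus_{i=1}^{n}R(-d_i)$ with $\sum d_i=d-1$. I would choose homogeneous generators $\delta_i=\sum_j g_j^{i}\partial_{x_j}$ and form the $n\times(n+1)$ coefficient matrix $N=(g_j^{i})$. The short exact sequence $0\to Der_0(f)\to R^{n+1}\xrightarrow{\nabla_f}J_f(d-1)\to 0$ becomes a minimal free resolution, so $J_f$ is Cohen-Macaulay of codimension $2$; Hilbert-Burch then produces a nonzero constant $\mu\in\kk^\ast$ (a scalar by degree counting) with $\partial_{x_j}f=(-1)^j\mu\,m_j$, where $m_j$ is the $n\times n$ minor of $N$ obtained by deleting column $j$. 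Expanding $\det M$ along its top row and invoking Euler's identity yields $\det M=(d/\mu)f$, giving $c=d/\mu\in\kk^\ast$.

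\emph{Backward direction.} Suppose $\delta_1,\ldots,\delta_n\in Der(f)$ satisfy $\det M=cf$ with $c\in\kk^\ast$. If $\delta_i(f)=h_i f$, the replacement $\delta_i\mapsto \delta_i-(h_i/d)\delta_E$ is a row operation on $M$, so it preserves $\det M=cf$ and (using $\operatorname{char}\kk=0$) lands in $Der_0(f)$. One may thus assume every row of $N$ is a syzygy on $\nabla_f$, i.e.\ the column $(\partial_{x_j}f)_j^T$ lies in the right kernel of $N$. By Cramer's rule, this kernel has rank one over $\operatorname{Frac}(R)$, spanned by $((-1)^j m_j)_j$, so $\partial_{x_j}f=(-1)^j u\,m_j$ for some $u\in\operatorname{Frac}(R)$. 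Euler's identity combined with $\det M=cf$ then forces $u=d/c\in\kk^\ast$, hence $J_f=\langle m_0,\ldots,m_n\rangle$. Since $f$ is reduced, $J_f$ has codimension $\ge 2$, and the Buchsbaum-Eisenbud/Hilbert-Burch exactness criterion (Eagon-Northcott at maximal codimension) guarantees that $0\to R^n\xrightarrow{N^T}R^{n+1}\xrightarrow{\nabla_f}J_f(d-1)\to 0$ is exact. This identifies $Der_0(f)$ with the free module generated by $\delta_1,\ldots,\delta_n$, so $S$ is free.

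\emph{Main obstacle.} The delicate step is the last one in the backward direction: upgrading the identity $\partial_{x_j}f=(-1)^j u\,m_j$, valid a priori only over $\operatorname{Frac}(R)$, into the exactness of a genuine free resolution over $R$. Hilbert-Burch does the work, but it requires the codimension bound $\operatorname{codim} J_f\geq 2$, supplied here by reducedness of $f$. The auxiliary assumption $\operatorname{char}\kk=0$ is also essential, both for $\delta_E(f)=d f$ to be nondegenerate and for the reduction $\delta_i\mapsto \delta_i-(h_i/d)\delta_E$ to stay polynomial.
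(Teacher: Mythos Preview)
The paper does not prove this statement; it is quoted as Saito's criterion with a bare reference to \cite{S}. The only argument the paper supplies is the heuristic paragraph at the end of \S2, which sketches the forward direction exactly along the lines you propose: freeness $\Rightarrow$ $J_f$ is codimension-two Cohen--Macaulay $\Rightarrow$ Hilbert--Burch presents $J_f$ by the maximal minors of the syzygy matrix $\Rightarrow$ Euler's identity turns this into $f=\det M$. Your forward direction is that sketch made precise, and it is correct.

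For the converse you supply what the paper omits entirely. The reduction $\delta_i\mapsto \delta_i-(h_i/d)\delta_E$ is legitimate (a row operation on $M$ against the Euler row, so $\det M$ is unchanged); Cramer's rule then puts $(\partial_{x_j}f)_j$ and $((-1)^jm_j)_j$ in the same one-dimensional $\operatorname{Frac}(R)$-kernel of $N$, and Euler pins the proportionality constant to $d/c\in\kk^\ast$. The only substantive input left is exactness over $R$, and you correctly identify it: reducedness of $f$ gives $\operatorname{ht}(J_f)\ge 2$, which is precisely the depth condition the Buchsbaum--Eisenbud acyclicity criterion needs to make $0\to R^n\xrightarrow{N^T}R^{n+1}\xrightarrow{\nabla_f}J_f(d-1)\to 0$ a free resolution. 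So $Der_0(f)$ is free on $\delta_1,\ldots,\delta_n$.

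For context, Saito's original argument in \cite{S} is set in the local analytic category and proceeds by a direct divisibility/determinant argument rather than via the structure theory of Cohen--Macaulay ideals; the Hilbert--Burch route you take is the standard modern proof in the graded setting and is the one the authors clearly have in mind.
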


In \cite[Theorem 2.5]{DIMSV} we reformulate Saito's criterion in terms of eigenschemes and we get nice applications about the freeness of pencils of curves \cite[Theorem 3.6]{DIMSV}. In this section we generalize this result and we translate Saito's criterion in terms of ME-schemes. To this end the following technical lemma will be very useful:

\begin{lemma}\label{keylema}
 Fix integers $d_1, \cdots ,d_{n-1}\ge 2$ and let $T_i=(g_0^{i},g_1^{i},\cdots ,g_n^{i})\in (Sym^{d_i}\kk ^{n+1})^{\oplus n+1}$, $i=1, \cdots ,n-1$  be $n-1$
  partially symmetric tensors. Let $S=V(f)\subset \PP^n$ be a hypersurface of degree $d\ge d_1+\cdots +d_{n-1}-1$. $S$ contains $E(T_1,\cdots ,T_{n-1})$ if and only if there exists a $n$-th partially symmetric tensor $T=(g_0,g_1,\cdots ,g_n)\in (Sym^{d-d_1-\cdots -d_{n-1}+1}\kk^{n+1})^{\oplus n+1}$ such that
  $$ \det \begin{pmatrix} x_0 & x_1  & \cdots  & x_n \\
g_0^{1} & g_1^{1}  & \cdots  & g_n^{1}
\\ \vdots & \vdots & \cdots & \vdots \\
g_0^{n-1} & g_1^{n-1}  & \cdots  & g_n^{n-1} \\
g_0 & g_1 & \cdots  & g_n
\end{pmatrix} =cf \quad \text{ with }\quad  c\in \kk ^*
  $$
\end{lemma}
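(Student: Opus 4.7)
The plan is to reduce the lemma to the Hilbert--Burch structure theorem, translating the scheme-theoretic containment $S\supset E(T_1,\ldots,T_{n-1})$ into ideal membership and then identifying that membership with a Laplace expansion of the displayed $(n+1)\times(n+1)$ determinant.

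First I would record what is already available from Section~2. The ME-scheme $E(T_1,\ldots,T_{n-1})$ is standard determinantal of codimension two, cut out by the maximal minors of the $n\times(n+1)$ matrix $M_{T_1,\ldots,T_{n-1}}$, and by Hochster--Eagon its homogeneous ideal $I:=I(E(T_1,\ldots,T_{n-1}))$ is Cohen--Macaulay and saturated; consequently $S\supset E(T_1,\ldots,T_{n-1})$ is equivalent to $f\in I$. Hilbert--Burch then tells us that the signed $n\times n$ minors
$$m_j=(-1)^{n+j}\det\bigl(M^{(j)}_{T_1,\ldots,T_{n-1}}\bigr),\qquad j=0,1,\ldots,n,$$
with $M^{(j)}_{T_1,\ldots,T_{n-1}}$ the submatrix obtained by deleting the $j$-th column of $M_{T_1,\ldots,T_{n-1}}$, form a minimal homogeneous generating set of $I$ in the degrees prescribed by the matrix.

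Both directions then flow from a single calculation: cofactor expansion of the displayed matrix along its last row gives
$$\det\begin{pmatrix}x_0 & x_1 & \cdots & x_n \\ g^{1}_0 & g^{1}_1 & \cdots & g^{1}_n \\ \vdots & \vdots & \ddots & \vdots \\ g^{n-1}_0 & g^{n-1}_1 & \cdots & g^{n-1}_n \\ g_0 & g_1 & \cdots & g_n\end{pmatrix}=\sum_{j=0}^{n}g_j\,m_j.$$
For the ``if'' implication this places $cf$ in $I$, so that $S\supset E(T_1,\ldots,T_{n-1})$. For the ``only if'' implication, Hilbert--Burch produces homogeneous coefficients $g_0,\ldots,g_n$ with $f=\sum_j g_j m_j$, and reading this identity in reverse realises $f$ as the claimed determinant (with $c=1$; any other $c\in\kk^*$ is obtained by rescaling the added row). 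A degree count places each $g_j$ in the required symmetric power, and the hypothesis $d\ge d_1+\cdots+d_{n-1}-1$ is precisely what forces this degree to be non-negative so that a bona fide tensor $T=(g_0,\ldots,g_n)$ exists.

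The only delicate point is the invocation of Hilbert--Burch, which presupposes that the maximal minors of $M_{T_1,\ldots,T_{n-1}}$ cut out an ideal of the expected codimension two; this is exactly the standing ACM hypothesis on the ME-scheme recorded in Section~2. Everything else is a routine bookkeeping of signs and degrees in the cofactor expansion, and the two directions of the lemma are mirror images of each other under this one Laplace identity.
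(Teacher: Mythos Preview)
Your proof is correct and follows essentially the same route as the paper: translate scheme-theoretic containment into membership in the ideal generated by the maximal minors of $M_{T_1,\ldots,T_{n-1}}$, and then recognise any expression $f=\sum_j g_j m_j$ as the Laplace expansion along the last row of the displayed $(n+1)\times(n+1)$ matrix. The paper's argument is exactly this, written more tersely; your version adds the explicit justification (via Hochster--Eagon and the codimension-two hypothesis) that the determinantal ideal is saturated, so that ideal membership and scheme containment genuinely coincide.
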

 \begin{proof} We consider the matrix $M_{T_1,\cdots ,T_{n-1}}$ associated to the tensors $T_1, \cdots ,T_{n-1}$. We denote by $h_i$ the determinant of the submatrix of $M_{T_1,\cdots ,T_{n-1}}$ obtained deleting the $i$-th column of  $M_{T_1,\cdots ,T_{n-1}}$. The hypersurface $S$ contains $E(T_1,\cdots ,T_{n-1})$ if and only if $(f)=I(S)\subset I(E(T_1,\cdots ,T_{n-1}))=\langle h_1,h_2,\cdots ,h_{n+1} \rangle $ if and only if there exist homogeneous polynomials $g_i\in R_{d-d_1-\cdots -d_{n-1}+1}$ such that $f=h_1g_0+h_2g_1+\cdots +h_{n+1}g_n$ or, equivalently, $$f=\det \begin{pmatrix} x_0 & x_1  & \cdots  & x_n \\
g_0^{1} & g_1^{1}  & \cdots  & g_n^{1}
\\ \vdots & \vdots & \cdots & \vdots \\
g_0^{n-1} & g_1^{n-1}  & \cdots  & g_n^{n-1} \\
g_0& g_1 & \cdots & g_n
\end{pmatrix}$$
 which proves what we want.
 \end{proof}

From now on given a non-zero irreducible derivation $\delta =P_0\partial _{x_0}+P_1\partial_{x_1}+\cdots +P_n\partial _{x_n}$ of degree $s$ we consider the graded  $R$-module of forms $f\in R$ with $\delta $ as a tangent derivation, i.e.
$$K(\delta):=\oplus _{r \ge 0}K(\delta)_r=\{f\in R \mid \delta(f)\in \langle f\rangle\};$$
and we call it the {\em kernel } of the derivation $\delta $. Notice that $F\in K(\delta)$ if and only if $\delta \in Der(F)$.

\begin{proposition}\label{saitorevisited} Let $S=V(f)\subset \PP^n$ be a reduced hypersurface of degree $d$. Let $\delta _i=g_0^{i}\partial _{x_0}+ g_1^{i}\partial _{x_1}+\cdots + g_n^{i}\partial _{x_n}$, $1\le i \le n-1$, be $n-1$ non-zero irreducible minimal generators of $Der(S)$ of degree $d_1\le \cdots \le  d_{n-1}$. Assume that  $E(T_1,\ldots,T_{n-1})$ has codimension $2$. Then, $S$ is free with exponents $(d_1,\cdots ,d_{n-1},d-d_1-\cdots -d_{n-1}-1)$ if and only if $S$ contains the ME-scheme $E(T_1,\cdots T_{n-1})$  associated to the partially symmetric tensors $T_i=(g_0^{i},g_1^{i},\cdots ,g_n^{i})\in (Sym^{d_i-1}\kk$$^{n+1})^{\oplus n+1}$, $i=1,\cdots ,n-1$.
\end{proposition}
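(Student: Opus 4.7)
The plan is to prove both implications using Lemma \ref{keylema} as the bridge between Saito's criterion and containment of the ME-scheme, with the substantive work concentrated in the reverse direction. For the easy direction ($\Rightarrow$), if $S$ is free with the stated exponents, then $Der_0(f)\cong\bigoplus_{i=1}^{n}R(-d_i)$ with $d_n=d-d_1-\cdots-d_{n-1}-1$, and the $n-1$ prescribed minimal generators $\delta_1,\dots,\delta_{n-1}$ extend, by a graded Nakayama argument, to a full free basis $\delta_1,\dots,\delta_{n-1},\delta$ with $\delta$ of degree forcing total Saito-determinant degree $d$. Saito's criterion (Theorem 3.1) then gives $\det M=cf$ for the augmented Saito matrix $M$, and Lemma \ref{keylema} rephrases this determinantal equality directly as $S\supseteq E(T_1,\dots,T_{n-1})$.

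For the converse, assume $S\supseteq E(T_1,\dots,T_{n-1})$. Lemma \ref{keylema} furnishes a tensor $T=(g_0,\dots,g_n)$ such that the augmented matrix $M$ (with bottom row $T$) satisfies $\det M=cf$. Set $\delta=\sum_k g_k\partial_{x_k}$. The entire problem reduces to verifying $\delta(f)\in\langle f\rangle$, because once $\delta\in Der(f)$ is known, Saito's criterion applied to $\delta_1,\dots,\delta_{n-1},\delta$ yields the freeness with the announced exponents. The key computation is to apply $M$ to the gradient column $\nabla f=(\partial_{x_0}f,\dots,\partial_{x_n}f)^T$: Euler's identity on the top row and $\delta_i(f)=0$ on the middle rows give
$$M\cdot \nabla f \;=\; (df,\,0,\dots,0,\,\delta(f))^T.$$
Pre-multiplying by the adjugate $M^{\#}$ (so that $M^{\#}M=cf\cdot I$) and reading off the $k$-th entry produces
$$(M^{\#})_{k,n}\cdot \delta(f) \;=\; f\cdot\bigl(c\,\partial_{x_k}f-d\,(M^{\#})_{k,0}\bigr),\qquad k=0,\dots,n.$$
Up to sign, $(M^{\#})_{k,n}$ are precisely the $n+1$ maximal minors $h_0,\dots,h_n$ of $M_{T_1,\dots,T_{n-1}}$, i.e., the standard generators of $I(E(T_1,\dots,T_{n-1}))$; hence $f$ divides $\delta(f)\cdot h_k$ for every $k$.

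The codimension-$2$ hypothesis on $E(T_1,\dots,T_{n-1})$ now closes the argument. No irreducible factor $p$ of $f$ can divide every $h_k$ simultaneously, since that would force $(p)\supseteq I(E(T_1,\dots,T_{n-1}))$, hence $V(p)\subseteq V(I(E(T_1,\dots,T_{n-1})))$, contradicting $\dim V(p)=n-1>n-2$. Consequently, for each irreducible factor $p$ of $f$ there is some $k$ with $p\nmid h_k$, and the divisibility $f\mid \delta(f)\cdot h_k$ forces $p\mid \delta(f)$. Since $f$ is reduced, this gives $f\mid \delta(f)$, i.e.\ $\delta\in Der(f)$, and Saito's criterion concludes. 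The main obstacle is exactly this last step: the adjugate-gradient identity is a routine manipulation, but turning the pointwise divisibilities $f\mid\delta(f)h_k$ into $f\mid\delta(f)$ requires precisely the codimension-$2$ assumption to exclude a common factor shared by $f$ and all generators of $I(E(T_1,\dots,T_{n-1}))$; without that hypothesis the argument genuinely fails.
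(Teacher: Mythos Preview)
Your proof is correct and follows essentially the same route as the paper's: both directions hinge on Lemma~\ref{keylema}, and for the converse both use the cofactor/adjugate identity to show that any irreducible factor of $f$ not dividing $\delta(f)$ would have to divide every maximal minor $h_k$ of $M_{T_1,\dots,T_{n-1}}$, contradicting the codimension-$2$ hypothesis. The paper merely sketches this step (``arguing as in \cite{DIMSV}'' with a reference to cofactors), whereas you spell out the adjugate computation $M^{\#}M=cf\cdot I$ and the resulting divisibility $f\mid h_k\,\delta(f)$ explicitly, which is a genuine improvement in clarity.

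One small point: you write $\delta_i(f)=0$ for the middle rows, but the hypothesis only gives $\delta_i\in Der(f)$, i.e.\ $\delta_i(f)=\lambda_i f$ for some $\lambda_i$. This is harmless---either subtract a multiple of the Euler row from row $i$ (which leaves the determinant unchanged) to reduce to $\delta_i\in Der_0(f)$, or simply carry the extra $\lambda_i f$ terms through the adjugate computation; the conclusion $f\mid h_k\,\delta(f)$ is unaffected.
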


\begin{proof}
Arguing as in \cite[Theorem 2.5]{DIMSV}, we start assuming that $S$ is free with exponents $(d_1,\ldots,d_{n-1},d-\Sigma_id_i-1)$, we can choose as base $\delta_1,\ldots,\delta_{n-1},\delta$ for suitable $\delta=g_0\partial_{x_0}+\cdots+g_n\partial_{x_n}$ of degree $d-\Sigma_i\delta_i-1$. Saito's criterion says us that 
\begin{equation}\label{eq_saito}
    \det \begin{pmatrix} x_0 & x_1  & \cdots  & x_n \\
g_0^{1} & g_1^{1}  & \cdots  & g_n^{1}
\\ \vdots & \vdots & \cdots & \vdots \\
g_0^{n-1} & g_1^{n-1}  & \cdots  & g_n^{n-1} \\
g_0& g_1 & \cdots & g_n
\end{pmatrix}=cf
\end{equation}
with $c\in \kk^*$ and this gives the thesis by Proposition \ref{saitorevisited}. Conversely, $S$ contains the $ME-$scheme if and only if there exists a tensor $T=(g_0,g_1,\cdots ,g_n)\in (Sym^{d-1}\kk$$^{n+1})^{\oplus n+1}$, such that \eqref{eq_saito} holds and we can also assume $c=1$. $T$ defines a derivation $\delta:=g_0\partial_{x_0}+\cdots+g_n\partial_{x_n}$. It is enough to prove that $\delta\in Der(S)$. If $\delta\notin Der(S)$, then there exists an irreducible factor. $g$ of $f$ that does not divide $\delta(f)$.
Arguing as in \cite{DIMSV}, let $M$ be the matrix in \eqref{eq_saito}
whose determinant is $cf$ and $CoM$ the matrix of cofactors
of $M$ and for any entry
$m_{ij}$,
let $m_{ij}^{'}$ be its cofactor.
We get that $g$ divides each generator of $I(E(T_1,\ldots,T_n))$, so the $ME$-scheme contains the hypersurface $V(g)$, this contradicts the codimension $2$.
\end{proof}

We are now ready to establish the revisited Saito's criterion for freeness.

\begin{theorem} \label{pencil}
Let $S_i: \ f_i=0$, $i=1,2$, be two reduced hypersurfaces of degree $d\ge 1$ in $\PP^n$ without common components. Assume that there exist $n-1$ partially symmetric tensors
$T_i=(g_0^{i},g_1^{i},\cdots ,g_n^{i})\in (Sym^{d_i-1}\kk$$^{n+1})^{\oplus n+1}$, $1\le i \le n-1$, such that
$\delta _i=g_0^{i}\partial _{x_0}+ g_1^{i}\partial _{x_1}+ \cdots  + g_n^{i}\partial _{x_n}$ are part of a minimal system of generators of both $Der_0(f_1)$ and $Der_0(f_2)$. If $S: \ f_1f_2=0$ is a free hypersurface with exponents $(d_1,\cdots ,d_{n-1},2d-d_1-\cdots -d_{n-1}-1)$ then $S_k: \ f_1f_2\prod _{j=1}^k(\alpha _jf_1+\beta _jf_2)=0$ is a free surface with exponents $(d_1,\cdots ,d_{n-1},(k+2)d-d_1-\cdots -d_{n-1}+1)$.
\end{theorem}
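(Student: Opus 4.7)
The strategy is to verify the hypotheses of Proposition~\ref{saitorevisited} for the hypersurface $S_k$ with the same tensors $T_1,\ldots,T_{n-1}$. The heart of the argument is to show that both the logarithmic condition on the $\delta_i$ and the ME-scheme containment pass automatically from $S=V(f_1f_2)$ to $S_k$.

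First I would observe that the derivations $\delta_1,\ldots,\delta_{n-1}$ are inherited by $S_k$. Since each $\delta_i$ lies in both $Der_0(f_1)$ and $Der_0(f_2)$, one has $\delta_i(f_1)=\delta_i(f_2)=0$, hence $\delta_i(\alpha_jf_1+\beta_jf_2)=0$ for every $j=1,\ldots,k$. The Leibniz rule then yields $\delta_i\bigl(f_1f_2\prod_{j=1}^k(\alpha_jf_1+\beta_jf_2)\bigr)=0$, so that $\delta_1,\ldots,\delta_{n-1}\in Der_0(S_k)$, still irreducible and of the same degrees $d_1\le\cdots\le d_{n-1}$.

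The second step is to transfer the ME-scheme containment. By hypothesis $S=V(f_1f_2)$ is free with exponents $(d_1,\ldots,d_{n-1},2d-d_1-\cdots-d_{n-1}-1)$, so Proposition~\ref{saitorevisited} applied to $S$ gives $E(T_1,\ldots,T_{n-1})\subseteq S$, and this ME-scheme has codimension $2$. Since $f_1f_2$ divides the defining polynomial of $S_k$, one has $S\subseteq S_k$ as schemes, and therefore $E(T_1,\ldots,T_{n-1})\subseteq S_k$.

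To conclude via Proposition~\ref{saitorevisited} one still needs $\delta_1,\ldots,\delta_{n-1}$ to be part of a minimal system of generators of $Der_0(S_k)$, and this is the main obstacle: one must rule out the appearance of a Jacobian syzygy for $f_1f_2\prod_j(\alpha_jf_1+\beta_jf_2)$ of degree strictly smaller than some $d_i$. The cleanest way to bypass this minimality issue is to invoke Lemma~\ref{keylema} directly: the inclusion $E(T_1,\ldots,T_{n-1})\subseteq S_k$ produces an $n$-th partially symmetric tensor $T=(g_0,\ldots,g_n)$ and a scalar $c\in\kk^*$ with $\det M=c\cdot f_1f_2\prod_{j=1}^k(\alpha_jf_1+\beta_jf_2)$, where $M$ is the Saito-type $(n+1)\times(n+1)$ matrix whose first row is $(x_0,\ldots,x_n)$ and whose next $n-1$ rows are $T_1,\ldots,T_{n-1}$. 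Combined with Step~1, this determinantal identity is exactly Saito's criterion (Theorem above Lemma~\ref{keylema}), forcing $S_k$ to be free with exponents $(d_1,\ldots,d_{n-1},(k+2)d-d_1-\cdots-d_{n-1}-1)$, matching the degree count $\deg S_k-1=(k+2)d-1$. The argument is parallel in structure to \cite[Proposition~4.1]{DIMSV}, which handles the plane case.
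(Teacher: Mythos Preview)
Your approach mirrors the paper's: both use Proposition~\ref{saitorevisited} to pass from freeness of $S=V(f_1f_2)$ to the containment $E(T_1,\ldots,T_{n-1})\subset S$, observe that $\delta_i\in Der_0(S_k)$ by Leibniz, note the obvious inclusion $S\subset S_k$, and conclude by the reverse implication. The paper simply re-applies Proposition~\ref{saitorevisited} to $S_k$ without commenting on whether the $\delta_i$ remain \emph{minimal} generators of $Der_0(S_k)$; you flag this point and try to bypass it by going through Lemma~\ref{keylema} and Saito's criterion directly.

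There is one residual step in your workaround. Saito's criterion, as stated, requires all $n$ derivations appearing in the matrix to lie in $Der(S_k)$. Your Step~1 gives this for $\delta_1,\ldots,\delta_{n-1}$, but not for the $n$-th derivation $\delta=g_0\partial_{x_0}+\cdots+g_n\partial_{x_n}$ produced by Lemma~\ref{keylema}; the determinantal identity $\det M=c\,f_1f_2\prod_j(\alpha_jf_1+\beta_jf_2)$ alone does not say $\delta\in Der(S_k)$. This missing step is precisely what the converse direction of the proof of Proposition~\ref{saitorevisited} supplies, using the codimension-$2$ hypothesis on $E(T_1,\ldots,T_{n-1})$: if $\delta\notin Der(S_k)$ then some irreducible factor of the defining polynomial would divide every maximal minor of $M_{T_1,\ldots,T_{n-1}}$, forcing the ME-scheme to contain a hypersurface. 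That argument makes no use of minimality of the $\delta_i$, so once you insert it your proof and the paper's are effectively the same. (Your final exponent $(k+2)d-\sum_i d_i-1$ is the correct value making the exponents sum to $\deg S_k-1$; the ``$+1$'' in the statement is a typo.)
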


\begin{proof} Since $S: \ f_1f_2=0$ is free, by Proposition \ref{saitorevisited}, we know that $S$ contains the ME-scheme $E(T_1,\cdots ,T_{n-1})$ associated to the partially symmetric tensors $T_1, \cdots ,T_{n-1}$.

On the other hand,   $$\delta _i=g_0^{i}\partial _{x_0}+ g_1^{i}\partial _{x_1}+ \cdots  + g_n^{i}\partial _{x_n}
\in Der_0(f_1)\cap Der _0(f_2) \quad 1\le i \le n-1$$ Therefore, $\delta _i\in Der_0(f_1f_2\prod _{j=1}^k(\alpha _jf_1+\beta _jf_2))$ and, applying again Proposition \ref{saitorevisited}, we get that the surface $S_k$ is free with exponents $(d_1-1,\cdots ,d_{n-1}-1,(k+2)d-d_1-\cdots -d_{n-1}+n-2)$
if and only if $S_k$ contains $E(T_1,\cdots ,T_{n-1})$.
The result now follows from the observation that
 $E(T_1,,\cdots T_{n-1})\subset S=V(f_1f_2)\subset S_k=V(f_1f_2\prod _{j=1}^k(\alpha _jf_1+\beta _jf_2))$.
\end{proof}

%%%%%%%%%%%%%%%%%%%%%%%%%%%%%%%%%%%%%%%%%%%%%%%%%%%%

\section{Applications}

Much of the early work on freeness focused on hyperplane arrangements and use as a key tool the so-called deletion-restriction operation \cite{T} or the composition formula \cite{BC} which, in particular, gives us the following result: if $f\in \kk[x_0,\cdots ,x_n]$ defines a free hypersurface in $\PP^{n}$ and $g\in \kk[y_0,\cdots ,y_m]$ defines a free hypersurface in $\PP^{m}$ then $fg$ and $fg(f+g)$ define free hypersurfaces in $\PP^{n+m+1}$.

In this section, we will use the
geometry of arithmetically Cohen-Macaulay hypersurfaces naturally associated to partially symmetric tensors to gain some knowledge about the freeness of pencils of hypersurfaces. In fact, as application of the Saito's revisited freeness criterion (see Theorem \ref{pencil}) we will be able to construct free hypersurfaces in $\PP^n$, $n\ge 3$, of arbitrarily high degree. Let us start with a toy example

\begin{example}
    \label{cubicspencil} In this example, we will construct free surfaces in $\PP^3$ all of them are union of surfaces in a suitable pencil of cubics.
    Take $f_1=xyz+xzt$ and $f_2=xyt+yzt$. The pencil of cubic surfaces $S_k: \ f_1f_2\prod _{j=1 }^k (a_jf_1+b_jf_2)=0$  in $\PP^3$ is free with exponents $(2,2,3k+1).$ Indeed, $Q_1=(2x^2+xz,-xy+yz,-2z^2-xz,-xt+ zt)$ and $Q_2=(xy-xt,-2y^2-yt,yz-zt,yt+2t^2)$ are two minimal syzygies of degree 2 of the Jacobian ideal of $f_1$ as well as the Jacobian ideal of $f_2$. Moreover, $f_1f_2$ is a free surface in $\PP^3$ with exponents $(1,2,2).$ Therefore, applying Theorem \ref{pencil} we conclude that $f_1f_2\prod _{k=3}^r(a_kf_1+b_kf_2)$
with $a_k,b_k\in \kk\setminus \{0\}$ is free with exponents $(2,2,1+3(k-2))$.
\end{example}

Our goal is to  generalize this last  example. To this end, let us start fixing some notation. Write $n=2m+\epsilon $ with $\epsilon =0,1$.  We consider the hyperplane arrangement $\cA: \ x_0\cdot ... \cdot x_n=0$ of the $n+1$ coordinate hyperplanes in $\PP^n$. $\cA $ is a free arrangement with exponents $(1,\cdots ,1)$, i.e.,
$$
\cT_{\cA} \simeq \cO_{\PP^n}(-1)^n.
$$
The Jacobian ideal of $\cA$ is
$$
J=\langle x_1x_2\cdots x_n, x_0x_2\cdots x_n, \cdots, x_0x_1\cdots x_{n-1} \rangle
$$
and the singular locus consists of the ${n+1\choose 2}$ codimension two faces of the hypertetrahedron. Set 
$$h_i=x_0\cdot ...\cdot \widehat{x_i}\cdot ... \cdot x_n, \ i=0,\cdots ,n.$$ We consider the hypersurfaces  $S_1$ and $S_2$ of degree $n$  defined by  $$f_1=\sum _{i=m+1}^n h_i \text{ and } f_2=\sum _{i=0}^mh_i$$ and we denote by $$\cP :=\{S_{a,b}=aS_1+bS_2\}_{ a,b\in \kk}$$ the pencil of hypersurfaces of degree $n$ determined by $S_1$ and $S_2$.

\begin{theorem} \label{mainthm} With the above notation, we have
\begin{itemize}
   \item[(i)] $S_1S_2$ is free with exponents $(1,2,\cdots ,2)$
   \item[(ii)] For any $k\ge 3$ and for generic hypersurfaces $S_{a_i,b_i}$, $3\le i \le k$, in the pencil $\cP$, the hypersurface 
$$S_1S_2\prod _{i=3}^k S_{a_i,b_i}$$
is free with exponents $(2,...,2,n(k-2)+1)$.
\end{itemize}
\end{theorem}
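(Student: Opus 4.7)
The plan is to exhibit an explicit free basis of $Der_0(f_1 f_2)$ in order to prove part (i) via Saito's criterion, and then deduce part (ii) by applying Theorem \ref{pencil}. First note the factorizations
$$f_1 = (x_0 \cdots x_m)\,g_1, \qquad f_2 = (x_{m+1} \cdots x_n)\,g_2,$$
where $g_1 = e_{n-m-1}(x_{m+1}, \ldots, x_n)$ and $g_2 = e_m(x_0, \ldots, x_m)$ are elementary symmetric polynomials. Set $\eta_i := x_i^2 \partial_{x_i}$ and define the ``block Euler'' derivations $D_S := \sum_{i \leq m} x_i \partial_{x_i}$, $D_L := \sum_{i > m} x_i \partial_{x_i}$. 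A direct computation yields $(\eta_i - \eta_j)(f_k) = (x_i - x_j) f_k$ for both $k = 1, 2$ whenever $i, j$ lie in the same block (both $\le m$ or both $> m$); and $D_S(f_1) = (m+1)f_1$, $D_L(f_1) = (n-m-1)f_1$, $D_S(f_2) = m f_2$, $D_L(f_2) = (n-m) f_2$. Hence
$$\tilde\delta_{ij} := (\eta_i - \eta_j) - \tfrac{x_i - x_j}{n}\,\delta_E \;\in\; Der_0(f_1) \cap Der_0(f_2)$$
for each same-block pair, and the degree-$1$ derivation $\delta_0 := (2n-2m-1)\,D_S - (2m+1)\,D_L$ satisfies $\delta_0(f_1) = n f_1$, $\delta_0(f_2) = -n f_2$, so $\delta_0 \in Der_0(f_1 f_2)$. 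Taking $\tilde\delta_{0,j}$ for $j = 1, \ldots, m$ and $\tilde\delta_{m+1, j}$ for $j = m+2, \ldots, n$ produces $n-1$ independent degree-$2$ derivations; together with $\delta_0$, these are $n$ candidate generators of $Der_0(f_1 f_2)$ of degrees $(1, 2, \ldots, 2)$.

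For part (i), form the $(n+1) \times (n+1)$ Saito matrix $M$ with first row $(x_0, \ldots, x_n)$, second row the coefficient vector of $\delta_0$, and remaining rows the coefficient vectors of the chosen $\eta_i - \eta_j$ (which can be used instead of $\tilde\delta_{ij}$ since they differ by multiples of the first row). A single row operation on rows 1 and 2 --- possible because both are ``constant times $x_i$'' on each block --- replaces them with one row $(x_0, \ldots, x_m, 0, \ldots, 0)$ and one row $(0, \ldots, 0, x_{m+1}, \ldots, x_n)$, contributing a scalar factor $c_L - c_S = -2n$. Since the remaining rows are already block-supported, after a row permutation $M$ becomes block-diagonal. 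An induction on block size $a$, using cofactor expansion along the last row together with the identity $e_d(y_0, \ldots, y_d) = y_0 \cdots y_{d-1} + y_d \cdot e_{d-1}(y_0, \ldots, y_{d-1})$, shows
$$\det\begin{pmatrix} x_0 & x_1 & \cdots & x_{a-1} \\ x_0^2 & -x_1^2 & 0 & \cdots & 0 \\ \vdots & & \ddots & & \vdots \\ x_0^2 & 0 & \cdots & 0 & -x_{a-1}^2 \end{pmatrix} = (-1)^{a-1}(x_0 \cdots x_{a-1})\,e_{a-1}(x_0, \ldots, x_{a-1}).$$
Applying this to both block determinants and multiplying by $-2n$ yields $\det M = \pm 2n \cdot f_1 f_2 \neq 0$. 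Saito's criterion then confirms that the $n$ derivations freely generate $Der_0(f_1 f_2)$ with exponents $(1, 2, \ldots, 2)$.

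For part (ii), apply Theorem \ref{pencil} to the $n-1$ tensors $\tilde T_{ij}$ underlying the $\tilde\delta_{ij}$'s. By (i), $V(f_1 f_2)$ is free with exponents $(d_1, \ldots, d_{n-1}, 2n - \sum d_i - 1) = (2, \ldots, 2, 1)$ --- a reordering of $(1, 2, \ldots, 2)$ --- and by construction the $\tilde\delta_{ij}$'s lie in $Der_0(f_1) \cap Der_0(f_2)$ as part of a minimal generating system (they sit inside the free basis of $Der_0(f_1 f_2)$ constructed in (i)). Theorem \ref{pencil} then produces the freeness of $V(f_1 f_2 \prod_{i=3}^k S_{a_i,b_i})$ with exponents $(2, \ldots, 2, n(k-2)+1)$. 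I expect the main obstacle to be the inductive block-determinant identity in part (i), where the alternating sign and the emergence of $e_{a-1}$ as the cofactor must be tracked carefully; once it is established, the remaining arguments amount to bookkeeping and the direct invocation of Theorem \ref{pencil}.
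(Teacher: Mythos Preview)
Your argument is correct, and in fact your derivations coincide with the paper's up to rescaling and reindexing: a direct check shows that the paper's tensor $R_i$ satisfies $R_i = (x_{i-1}-x_m)\,\delta_E - n(\eta_{i-1}-\eta_m) = -n\,\tilde\delta_{i-1,m}$, and similarly each $S_j$ is $-n\,\tilde\delta_{m+j,n}$. So the $n-1$ degree-$2$ derivations in both proofs span the same subspace of $Der_0(f_1)\cap Der_0(f_2)$.

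The substantive difference is in part~(i). The paper asserts that the $\delta_i,\eta_j$ form part of a minimal generating set of $Der_0(f_1f_2)$ and then invokes Theorem~\ref{pencil}; but since Theorem~\ref{pencil} takes the freeness of $V(f_1f_2)$ as a \emph{hypothesis}, this is at best a shorthand for applying Proposition~\ref{saitorevisited} (the ME-scheme criterion), and the verification is left implicit. Your route is more direct: you compute the Saito determinant explicitly, reducing it via the row operation on the $(\delta_E,\delta_0)$ pair to a block-diagonal form and then proving the block identity
\[
\det\begin{pmatrix} x_0 & \cdots & x_{a-1}\\ x_0^2 & -x_1^2 & & \\ \vdots & & \ddots & \\ x_0^2 & & & -x_{a-1}^2 \end{pmatrix}=(-1)^{a-1}(x_0\cdots x_{a-1})\,e_{a-1}(x_0,\dots,x_{a-1})
\]
by induction. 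This yields $\det M=\pm 2n\,f_1f_2$ outright, so Saito's criterion applies without passing through the ME-scheme machinery. Your approach is self-contained and makes the exponents $(1,2,\dots,2)$ visible from the block sizes; the paper's approach, by contrast, packages everything into the pencil formalism but leaves the base case less transparent.

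One small point in part~(ii): the hypothesis of Theorem~\ref{pencil} as stated asks that the $\tilde\delta_{ij}$ be part of a minimal generating set of $Der_0(f_1)$ and $Der_0(f_2)$ \emph{separately}, whereas you only argue that they sit in a free basis of $Der_0(f_1f_2)$. In practice what the proof of Theorem~\ref{pencil} actually uses (via Proposition~\ref{saitorevisited}) is the latter together with the codimension-$2$ condition on the ME-scheme $E(T_1,\dots,T_{n-1})$, so your argument goes through; but you should also note that this codimension-$2$ condition follows from your determinant computation in~(i), since the $n\times(n+1)$ submatrix already has maximal minors generating an ideal containing the reduced form $f_1f_2$ divided by any linear combination from the bottom row. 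The paper is equally silent on this verification.
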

%\begin{theorem}  With the above notation, we have
%\begin{itemize}
 %   \item[(i)] $S_1$ and $S_2$ are free of exponents  Assume  $n=2s+1$ and define $f_1=x_0\cdot ..\cdot x_s(x_{s+1}\cdot ...\cdot x_{n-%1}+\cdots +x_{s+2}\cdot ...\cdot x_n) $
  %   and $f_2=x_{s+1}\cdot ...\cdot x_{n}(x_0\cdot...\cdot x_{s-1}+\cdots +x_2\cdot ...\cdot x_s) $. The hypersurface $$F=f_1f_2\prod %_{i=3}^k(a_if_1+b_if_2)$$
 %with $a_i,b_i\in K^*$ and $a_ib_j-a_jb_i\ne 0$ is free with exponents $(2,...,2,n(k-2)+1)$   
 %\item[(ii)] Assume  $n=2s$ and define $f_1=x_0\cdot ..\cdot x_s(x_{s+1}\cdot ...\cdot x_{n-1}+\cdots +x_{s+2}\cdot ...\cdot x_n) $
  %   and $f_2=x_{s+1}\cdot ...\cdot x_{n}(x_0\cdot...\cdot x_{s-1}+\cdots +x_2\cdot ...\cdot x_s) $. The hypersurface $$F=f_1f_2\prod %_{i=3}^k(a_if_1+b_if_2)$$
 %with $a_i,b_i\in K^*$ and $a_ib_j-a_jb_i\ne 0$ is free with exponents $(2,...,2,n(k-2)+1)$  
%\end{itemize}
%\end{theorem}

\begin{proof}We will assume that $n=2m$; analogous argument works for $n=2m+1$ as the reader can check. We consider the $m$ partial symmetric tensors $R_i=(R_i^0,R_i^1,\cdots ,R_i^n)\in (Sym^2\kk^{m+1})^{\oplus n+1}$ with $1\le i\le m$ and the $m-1$  partial symmetric tensors $S_j=(S_j^0,S_j^1,\cdots ,S_j^n)\in (Sym^2\kk^{m+1})^{\oplus n+1}$ with $1\le j\le m-1$ defined as follows:
$$R_i=(x_0(x_{i-1}-x_m),\cdots , x_{i-1}(-(n-1)x_{i-1}-x_m),\cdots , x_m(x_{i-1}+(n-1)x_m),$$
$$x_{m+1}(x_{i-1}-x_m),\cdots ,x_n(x_{i-1}-x_m)) \text{ for } i=1,\cdots , m;$$
and
$$S_j=(x_0(x_{m+j}-x_n), \cdots ,x_m(x_{m+j}-x_n),$$
$$ x_{m+1}(x_{m+j}-x_n),\cdots ,x_{m+j}(-(n-1)x_{m+j}-x_n),\cdots ,x_n(+(n-1)x_n+x_{m+j})) \text{ for } j=1,\cdots , m-1. $$

We easily check that for any integers $0\le t\le n$, $1\le i \le m$ and $0\le j \le m-1$ it holds:

$$\delta _i=\sum _{s=0}^nR_i^s\partial_{x_s}\in Der _0(f_1)\cap Der_0(f_2)$$
and
$$\eta _j=\sum _{s=0}^nS_j^s\partial_{x_s}\in \bigcap_{t\leq m} Der _0(h_t) \cap Der_0(f_1)\subset Der_0(f_1)\cap Der_0(f_2)$$
(i) We first observe that $f_1$ and $f_2$ are two reduced  hypersurfaces of degree $n$  without common factors and $\deg(f_1f_2)=2n\ge \sum _{i=1}^m\deg R_i+\sum _{j=1}^{m-1}\deg S_j=2n-2$. Moreover, a straightforward computation shows that $\delta _i $ and $\eta _j$ are part of a minimal system of generators of $Der_0(f_1f_2)$ and hence we conclude applying Theorem \ref{pencil}.

(ii) 
Arguing as above we get again  that  $\delta _i $ and $\eta _j$ are part of a minimal system of generators of $Der_0(f_1f_2\prod _{i=1}^k (a_if_1+b_if_2))$. Therefore,we can apply  Theorem \ref{pencil} we get what we want.
\end{proof}
\begin{example}\label{lastex} Now we consider again the arrangement $\cA$ of the 4 coordinate planes in $\PP^3$. As already noted, it is free, with 3 linear syzygies, hence
$$
\cT_A \simeq \cO_{\PP^3}(-1)^3.
$$
The Jacobian ideal of $\cA$ is obviously
$$
\langle xyz, xyw,xzw, yzw \rangle
$$
and the corresponding singular locus consists of the six edges of tetrahedron. 

\begin{center}
\begin{tikzpicture}[y ={(1 cm ,0 cm )} , x ={(-0.5 cm ,-0.5 cm )} ,z ={(0 cm ,1 cm )}] % sistema di riferimento tikz 3d
 \coordinate (O) at (0,0,0);
 \draw [-latex ] 
 (O) -- (3.5,0,0) node [ left ] {$ x $}
 ;
\draw 
[-latex ] 
(O) -- (0,3.5,0) node [ right ] {$ y $}
;
  \draw [-latex ] (O) -- (0,0,3.5) node [ above ] {$ z $};
%%% tetraedro
\draw (2.5 ,0 ,0) -- (0 ,2.5 ,0) -- (0 ,0 ,2.5) -- cycle ;
 \end{tikzpicture}    
\end{center}

Picking a generic cubic $C_1$ in this web and adding it to $\cA$ we obtain an arrangement $\cA \cup C_1$. A computation shows it has quasi-homogeneous singularities, and
$$\cT_{\cA \cup C_1} \simeq \cO(-2)^3.$$
If we add a second generic cubic $C_2$ from the web to this arrangement, we obtain an arrangement $\cA \cup C_1 \cup C_2$ which is now singular along a complete intersection curve of degree 9, consisting of the 6 singular lines of $\cA$, and a twisted cubic; the fact that a twisted cubic is a component follows using liaison.
Now there are two directions to go. If we continue to add cubics from the pencil of cubics defined by $C_1$ and $C_2$, and call $\mathcal C_k$ the resulting arrangement of the four coordinate planes and the $k$ cubics $C_1,\ldots,C_k$, then we compute that for all $k$,
$$
\cT_{\mathcal C_k} \simeq \cO(-2)^2 \oplus \cO(-3k+1).
$$

On the other hand, we can continue to add cubics from the original web. Denote such an arrangement, where $k$ cubics from the web have been added, as $\mathcal{C}'_k$. We find that
$$
  \begin{array}{ccc}
    \cT_{{\mathcal C}'_1} &\simeq &\cO(-2)^3.\\
    \cT_{{\mathcal C}'_2} &\simeq & \cO(-2)^2 \oplus \cO(-5).\\
    \cT_{{\mathcal C}'_3} &\simeq & \cO(-2) \oplus \cO(-5)^2.\\
    \cT_{{\mathcal C}'_4} &\simeq & \cO(-5)^3.\\
    \cT_{{\mathcal C}'_5} &\simeq & \cO(-6)^3.\\
    \cT_{{\mathcal C}'_k} &\mbox{ is not free if } &k \ge 6.\\
  \end{array}
$$

%\end{example}
The above example generalizes to $\PP^n$. Indeed, we consider the arrangement $\cA$ of the $n+1$ coordinate hyperplanes in $\PP^n$. $\cA $ is  free with exponents $(1,\cdots ,1)$, i.e.,
$$
\cT_{\cA} \simeq \cO_{\PP^n}(-1)^n.
$$
The Jacobian ideal of $\cA$ is
$$
J=\langle x_1x_2\cdots x_n, x_0x_2\cdots x_n, \cdots, x_0x_1\cdots x_{n-1} \rangle
$$
and the singular locus consists of the ${n+1\choose 2}$ codimension two faces of the hypertetrahedron. Let $S_1$ be a hypersurface of degree $n$ defined by a general form $f_1\in J$  and add it to $\cA$. We get a free arrangement $\cA \cup C_1$ with
$$
\cT_{\cA\cup S_1} \simeq \cO_{\PP^n}(-2)^n.
$$
Add a second hypersurface $S_2$ of degree $n$ defined by a general form $f_2\in J$  and add it to $\cA$. We have
$$
\cT_{\cA\cup S_1\cup S_2} \simeq \cO_{\PP^n}(-2)^{n-1}\oplus \cO_{\PP^n}(-2-n).
$$
 More general, we add hypersurfaces from the pencil defined by $S_1$ and $S_2$. The arrangement
 ${\mathcal C}_k:$ $x_0\cdots x_n\prod _{i=1}^k (\alpha _if_1+\beta _if_2)$
 is free with exponents $(2,\cdots ,2,2+n(k-1)$. Therefore, we have:
 $$
\cT_{{\mathcal C}_k} \simeq \cO_{\PP^n}(-2)^{n-1}\oplus \cO_{\PP^n}(-2-n(k-1)).
$$
\end{example}

\begin{remark}
% (1)   \red{I will add a remark explaining that if instead of adding hypersurfaces from the pencil $\cP$, we add hypersurfaces from the linear system $J$ the result is no longer true.} 
As highlighted in \cite[Theorem 5.2]{DIMSV} and Example \ref{lastex}, if, instead of adding hypersurfaces from the pencil $\cP$, we add hypersurfaces from the linear system $J$ the result is no longer true when the number of hypersurfaces is at least 5. These are only examples, but we computed, by Macaulay2, several situations of this type in several dimensions and the behavior is always the same. We are investigating them theoretically. 
\end{remark}

%%%%%%%%%%%%%%%%%%%%%%%%%%%%%%%%%%%%%%%%%%%%%%%%%%

\end{document}